\title{A positive proportion of elements of mapping class groups\\ is pseudo-Anosov}
\author{Mar\'ia Cumplido}
\address{Mar\'ia Cumplido, UFR Math\'ematiques, Universit\'e de Rennes 1, France, and Departamento de \'Algebra, Universidad de Sevilla, Spain}
\email{maria.cumplidocabello@univ-rennes1.fr}
\author{Bert Wiest}
\address{Bert Wiest, UFR Math\'ematiques, Universit\'e de Rennes 1,  France}
\email{bertold.wiest@univ-rennes1.fr}
\newtheorem{theorem}{Theorem}
\newtheorem{lemma}[theorem]{Lemma} 
\newtheorem{corollary}[theorem]{Corollary}
\theoremstyle{definition}
\newtheorem{remark}[theorem]{Remark}
\newtheorem{example}[theorem]{Example}
\def\Z{{\mathbb Z}}
\def\R{{\mathbb R}}
\def\co{\colon \thinspace}
\def\CC{\mathcal{CC}}
\renewcommand{\phi}{\varphi}
\def\F{\mathcal F}
\def\G{\mathcal G}
\def\H{\mathcal H}
\begin{document}

\begin{abstract}
In the Cayley graph of the mapping class group of a closed surface, with respect to any generating set, we look at a ball of large radius centered on the identity vertex, and at the proportion among the vertices in this ball representing pseudo-Anosov elements. A well-known conjecture states that this proportion should tend to one as the radius tends to infinity. We prove that it stays bounded away from zero. We also prove similar results for a large class of subgroups of the mapping class group.
\end{abstract}

\maketitle


\section{Pseudo-Anosovs in mapping class groups}

Let $\G$ be the mapping class group of a closed surface~$\Sigma$ (with genus$(\Sigma)\geqslant 2$), equipped with any finite generating set~$S$.
It is a widely  held belief that ``most'' elements of~$\G$ are pseudo-Anosov. In order to make this statement precise, we have to specify a way picking a ``random element'' of~$\G$. 

There are at least two standard methods for doing so, both involving the Cayley graph~$\Gamma$ of~$\G$ with respect to~$S$.  
A first method is to perform a random walk in the Cayley graph~$\Gamma$  of~$\G$, starting at the identity-vertex. In this framework, the belief is that the probability of obtaining a pseudo-Anosov element tends (exponentially quickly) to~$1$ as the length of the random walk tends to infinity. This belief has been proven to be correct, and it has been generalised far beyond the realm of mapping class groups, in \cite{Rivin, Maher, Sisto}.

For the second method, consider the ball $B_\Gamma(1,R)$ in~$\Gamma$ of radius $R$ and centered on the identity vertex $1$. The belief now is that the proportion of pseudo-Anosov elements among the vertices in this ball tends to~$1$ as the radius tends to infinity:
$$
\lim_{R\to\infty} \frac{|\text{pseudo-Anosov elements of }\G \ \cap \  B_\Gamma(1,R)|}{|B_\Gamma(1,R)|} \ = \ 1
$$
This statement remains a conjecture despite some progress in~\cite{CarusoWiest}.
Our main result is

\begin{corollary}\label{C:main}
$$
\liminf_{R\to\infty} \ \frac{|\text{pseudo-Anosov elements of }\G \ \cap \  B_\Gamma(1,R)|}{|B_\Gamma(1,R)|} \ > \ 0
$$
\end{corollary}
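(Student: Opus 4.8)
\medskip
\noindent\textbf{Proof proposal.}
The plan is to isolate a single coarse-geometric statement about pseudo-Anosov elements and feed it into a counting argument that does not see the generating set. Write $P\subseteq\G$ for the set of pseudo-Anosov elements and $|\cdot|_S$ for word length. Everything will follow from the \emph{Key Claim}: there is a constant $C$, depending only on $S$, such that $P$ is $C$-dense in the Cayley graph $\Gamma$ --- that is, for every $g\in\G$ there is $v\in\G$ with $|v|_S\leqslant C$ and $gv\in P$.

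Granting the Key Claim, choose such a $v(g)$ for each $g$ and set $\Phi(g)=g\,v(g)$. Then $\Phi$ maps $B_\Gamma(1,R)$ into $P\cap B_\Gamma(1,R+C)$, and it is at most $N$-to-one with $N=|B_\Gamma(1,C)|$, since $\Phi(g)=h$ forces $g=h\,v(g)^{-1}\in h\cdot B_\Gamma(1,C)$. Using also the trivial bound $|B_\Gamma(1,R+C)|\leqslant|B_\Gamma(1,R)|\cdot|B_\Gamma(1,C)|$ (cut a geodesic word of length $\leqslant R+C$ after its $R$-th letter), one obtains for every $R\geqslant C$
\[
\frac{|P\cap B_\Gamma(1,R)|}{|B_\Gamma(1,R)|}\ \geqslant\ \frac{1}{|B_\Gamma(1,C)|^{2}}\ >\ 0,
\]
which proves Corollary~\ref{C:main}. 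This deduction uses nothing about $S$ beyond finiteness, so it applies to every generating set at once; carried out for a subgroup $G\leqslant\G$ in place of $\G$, it gives the same conclusion for every finitely generated $G$ satisfying the Key Claim, which should be the source of the ``large class of subgroups''.

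To prove the Key Claim I would use the action of $\G$ on the curve complex $\CC(\Sigma)$: it is $\delta$-hyperbolic (Masur--Minsky) and acylindrical (Bowditch); a mapping class lies in $P$ exactly when it acts loxodromically, whereas every element of $\G\setminus P$, being periodic or reducible, acts with bounded orbits. Fix a pseudo-Anosov $\psi$ whose translation length on $\CC$ is large relative to $\delta$ and the acylindricity constants, together with a finite family of pseudo-Anosovs $\psi_1,\dots,\psi_m$ --- for instance conjugates of $\psi$, so that $\max_i|\psi_i|_S$ is a fixed number --- whose attracting/repelling points in $\partial\CC$ are in sufficiently general position. For a given $g$, run a north--south/ping-pong argument in $\partial\CC$: for each $i$ and each $n$ in a bounded range, $g\,\psi_i^{\,n}$ (or a two-sided variant $\psi_i^{\,n}g\,\psi_i^{\,n}$, if necessary) can fail to act loxodromically only when $g$ pushes the relevant ends of the axis of $\psi_i$ into a controlled ``bad'' region, and general position of the $\psi_i$ prevents $g$ from doing this for every $i$ at once. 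Hence some $g\,\psi_i^{\,n}$ with $|\psi_i^{\,n}|_S\leqslant n\cdot\max_i|\psi_i|_S=:C$ is pseudo-Anosov.

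The hard part is making the bound $C$ \emph{uniform} in $g$. For one fixed $\psi$ it is classical that $g\psi^{\,n}$ becomes pseudo-Anosov once $n$ is large, but a priori how large depends on how nearly $g$ identifies the two ends of the axis of $\psi$; extracting a threshold independent of $g$ is exactly where acylindricity must be used quantitatively --- uniform lower bounds on loxodromic translation length, bounded coarse stabilisers of geodesics, quantitative north--south dynamics in the style of Bowditch and Osin. This uniform coarse density of $P$, which is morally the same phenomenon underlying the random-walk results of \cite{Maher,Sisto}, is the core of the argument; everything else is soft.
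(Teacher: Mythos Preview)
Your reduction of Corollary~\ref{C:main} to the Key Claim, and the counting argument that follows, is exactly the paper's strategy: the Key Claim is precisely Theorem~\ref{T:main} (up to the harmless left/right switch $gv\leftrightarrow fg$, since pseudo-Anosov is a conjugacy invariant), and your bound $1/|B_\Gamma(1,C)|^2$ matches the paper's $(2|S|)^{-2(R'+1)}$ up to bookkeeping.

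The real divergence is in how the Key Claim is proved. The paper does \emph{not} run a ping-pong argument with pseudo-Anosovs on $\partial\CC$; instead it invokes Fathi's theorem: if $c$ is a curve with $\{g^n(c)\}_{n\in\Z}$ filling~$\Sigma$, then among $\{T_c^k\circ g\}_k$ at most seven consecutive values of~$k$ can fail to be pseudo-Anosov --- so either $g$ or $T_c^7\circ g$ is. This hands you uniformity in~$g$ for free, the very issue you flag as hard. Acylindricity is then used only in the mildest form: pick curves $a,b$ with $d_{\CC}(a,b)>R(3)$; for all but at most $N(3)$ elements~$g$, one of $d_{\CC}(a,g(a))$, $d_{\CC}(b,g(b))$ is $\geqslant 3$, hence the corresponding pair fills, hence Fathi applies. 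Thus $\F'=\{1,T_a^7,T_b^7\}$ handles all but finitely many~$g$, and the finite residue is absorbed by enlarging~$\F'$.

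Your proposed route is plausible but, as you yourself note, the uniformity of the exponent~$n$ in~$g$ is genuinely the crux, and your sketch does not resolve it: ``general position'' of finitely many axes does not by itself prevent an arbitrary $g$ from mapping all the relevant endpoints into the bad regions, since $g$ may displace basepoints in~$\CC$ arbitrarily far. An argument of this flavour can be completed --- the paper's Remark~\ref{R:AltProof} records a clean alternative via homogeneous quasi-morphisms bounded on non-pseudo-Anosovs (Bestvina--Fujiwara), which gives $\F=\{f,f^{-1}\}$ for a single well-chosen~$f$ --- but this requires more machinery than Fathi's theorem, which delivers the uniform constant~$7$ directly.
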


Our proof is actually very easy, and it uses only well-known ingredients -- notably, a classical theorem of Albert Fathi~\cite{Fathi}, and the acylindrical hyperbolicity of the mapping class group action on the curve complex~\cite{BowditchTightGeodCC}. 
Therefore it seems likely that Corollary~\ref{C:main}, and our proof, is well-known to some experts. However, we have not been able to locate it in the literature, and believe it deserves to be written down.

We will deduce Corollary~\ref{C:main} from the following, which is our main technical result:

\begin{theorem}[Positive density of pseudo-Anosovs]\label{T:main} 
There exists a finite subset~$\F$ of~$\G$ such that for any element $g$ of~$\G$, at least one of the mapping classes in $\{f\circ g \ | \ f\in\F\}$ is pseudo-Anosov.
\end{theorem}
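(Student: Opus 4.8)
The plan is to argue by contradiction, extracting a ``bad sequence'' and showing it cannot exist. I would use three ingredients: (1) the curve complex $\mathcal{C}=\mathcal{C}(\Sigma)$ is $\delta$--hyperbolic (Masur--Minsky) and $\mathcal{G}$ acts on it acylindrically (Bowditch), an element being pseudo-Anosov precisely when it is loxodromic; (2) \emph{Fathi's theorem}: for any fixed pseudo-Anosov $\phi$ and any $h\in\mathcal{G}$, the element $\phi^{k}h$ is pseudo-Anosov for all but finitely many $k\in\mathbb{Z}$; (3) the uniformity furnished by acylindricity -- loxodromic isometries have translation length bounded below by a universal $\varepsilon_{0}>0$, the quasi-axis $\ell$ of a pseudo-Anosov is uniformly contracting in $\mathcal{C}$, and $\ell$ fellow-travels each of its translates $g\ell$ with $g\notin E(\phi)$ for a uniformly bounded length, where $E(\phi)$ is the (virtually cyclic) maximal elementary subgroup containing $\phi$.

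Suppose no finite $\mathcal{F}$ works. Exhausting $\mathcal{G}$ by finite subsets produces a sequence $(g_{n})$ in $\mathcal{G}$ such that for every fixed $f\in\mathcal{G}$ the element $fg_{n}$ is \emph{not} pseudo-Anosov for all sufficiently large $n$. If some value were repeated infinitely often we would get a single $g_{*}$ with $\phi^{k}g_{*}$ non-pseudo-Anosov for all $k$, contradicting Fathi (even for $g_{*}\in E(\phi)$, since then $\phi^{k}g_{*}$ is loxodromic along $\ell$ for all $k\neq 0$); so the $g_{n}$ may be assumed distinct. Fix a pseudo-Anosov $\phi$ with quasi-axis $\ell$ through a basepoint vertex $x_{0}$ and endpoints $\phi^{\pm}\in\partial\mathcal{C}$; after passing to a subsequence and possibly replacing $\phi$ by $\phi^{-1}$, I may assume the nearest-point projection of $g_{n}^{-1}x_{0}$ to $\ell$ lies (coarsely) on the $\phi^{-}$--side of $x_{0}$.

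The heart of the argument is the claim that $g_{n}\phi^{+}\to\phi^{-}$ in $\partial\mathcal{C}$. To see it I would test loxodromicity of $w=\phi^{k}g_{n}$ at the point $p=g_{n}^{-1}x_{0}$: then $wp=\phi^{k}x_{0}$ lies far out along $\ell$, so by the normalisation above (no ``resonance'') the displacement $d(p,wp)=d(g_{n}^{-1}x_{0},\phi^{k}x_{0})$ grows linearly in $k$, while the Gromov product $(w^{-1}p\mid wp)_{p}=(\phi^{-k}g_{n}^{-1}x_{0}\mid g_{n}\phi^{k}x_{0})_{x_{0}}$ stays -- using that $\ell$ is contracting -- within a universal constant of $(\phi^{-}\mid g_{n}\phi^{+})_{x_{0}}$ once $k$ exceeds a bound depending only on that product. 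The standard ``no backtracking'' criterion then makes $\phi^{k}g_{n}$ loxodromic. Since by hypothesis $\phi^{k}g_{n}$ is non-pseudo-Anosov for $k$ in a range growing with $n$, the quantity $(\phi^{-}\mid g_{n}\phi^{+})_{x_{0}}$ must tend to infinity, which is exactly the claim. Now fix a pseudo-Anosov $\psi$ independent of $\phi$, i.e.\ $\{\psi^{+},\psi^{-}\}\cap\{\phi^{+},\phi^{-}\}=\emptyset$ (these exist classically). The sequences $(\psi^{j}g_{n})_{n}$ satisfy the same hypothesis, since $\phi^{k}\psi^{j}g_{n}$ is again of the form $fg_{n}$ for the fixed element $f=\phi^{k}\psi^{j}$; so the same argument yields $\psi^{j}(g_{n}\phi^{+})\to\phi^{-}$ for every $j$. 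Combining this with $g_{n}\phi^{+}\to\phi^{-}$ and the continuity of the homeomorphism $\psi^{j}$ of $\partial\mathcal{C}$ forces $\psi^{j}(\phi^{-})=\phi^{-}$ for all $j$; hence $\phi^{-}$ is a fixed point of the loxodromic isometry $\psi$, so $\phi^{-}\in\{\psi^{+},\psi^{-}\}$ -- contradicting independence. (If the normalisation had forced $\phi^{-1}$ in place of $\phi$ one gets $\phi^{+}\in\{\psi^{+},\psi^{-}\}$ instead, equally absurd.)

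I expect the main obstacle to be exactly the uniformity in the key step: promoting Fathi's ``all but finitely many $k$'' to the statement that $\phi^{k}g_{n}$ becomes loxodromic as soon as $k$ beats the single geometric quantity $(\phi^{-}\mid g_{n}\phi^{+})_{x_{0}}$, \emph{independently} of how far $g_{n}^{-1}x_{0}$ lies from $\ell$. Getting the test point $p$ right, choosing the sign of the power $k$ to avoid the resonant configurations, and controlling $d(g_{n}^{-1}x_{0},\phi^{k}x_{0})$ and the relevant Gromov product uniformly in $n$ is where the hyperbolic geometry of $\mathcal{C}$ and acylindricity -- via contracting pseudo-Anosov axes and the bounded fellow-travelling of $\ell$ with its translates -- really enter. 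Once this uniform Fathi-type statement is in hand, everything else is bookkeeping with subsequences.
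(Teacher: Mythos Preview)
Your route is genuinely different from the paper's and far more elaborate. The paper argues constructively: Fathi's actual theorem is about \emph{Dehn twists}---if the orbit $\{g^n(c)\}_{n\in\Z}$ fills, then among $g,\,T_c g,\ldots,T_c^7 g$ at least one is pseudo-Anosov. Bowditch's acylindricity then furnishes two curves $a,b$ at distance $>R(3)$ in $\mathcal{CC}(\Sigma)$ so that for all but at most $N(3)$ elements $g$ one of $\{a,g(a)\}$, $\{b,g(b)\}$ fills; hence $\F'=\{1,T_a^7,T_b^7\}$ already works for all but finitely many $g$, and the exceptions are absorbed by enlarging $\F'$ by hand. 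This yields an explicit $\F$ in a few lines. By contrast you attempt a nonconstructive contradiction via boundary dynamics. Note that what you call ``Fathi's theorem'' (that $\phi^k h$ is pseudo-Anosov for cofinitely many $k$ when $\phi$ is pseudo-Anosov) is a different statement; it is true, but it is not Fathi's theorem and needs its own justification.

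More importantly, your proposal has a real gap precisely where you flag it. The crux is the claim that $\phi^k g_n$ is loxodromic as soon as $k$ exceeds a threshold depending only on $(\phi^-\mid g_n\phi^+)_{x_0}$, \emph{uniformly in $n$}. In your Gromov-product computation the point $g_n\phi^k x_0$ lies on the translated axis $g_n\ell$, and controlling $(g_n\phi^k x_0\mid g_n\phi^+)_{x_0}$---hence $(w^{-1}p\mid wp)_p$---without dependence on $d(x_0,g_nx_0)$ is exactly what is not shown; your contracting-axis remarks do not by themselves give this. Until this uniformity is established the argument is a plan, not a proof. There is also a bookkeeping issue in the endgame: applying ``the same argument'' to $(\psi^j g_n)_n$ requires its own side-of-projection normalization, which need not match the one already fixed for $(g_n)$, so you might obtain $\psi^j g_n\phi^-\to\phi^+$ rather than $\psi^j g_n\phi^+\to\phi^-$, and the contradiction then needs extra care.
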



\section{Proofs}\label{S:Proofs}

{\bf Proof of Theorem~\ref{T:main} } \ In what follows, by a \emph{curve} we always mean an isotopy class of simple closed curves. We start by recalling the result from~\cite{Fathi}~: 

\begin{theorem}[Fathi]\label{T:Fathi}
If $g\in\G$ and if $c$ is a simple closed curve on~$\Sigma$ such that the curves $\{g^n(c) \ | \ n\in\Z\}$ together fill~$\Sigma$, then $T_c^k\circ g$ is always pseudo-Anosov, except for at most seven consecutive values of~$k$ (where $T_c$ denotes the Dehn twist along the curve~$c$). In particular, either $g$ or $T_c^7\circ g$ (or both) are pseudo-Anosov. 
\end{theorem}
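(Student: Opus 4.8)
The plan is to combine Thurston's classification with a compactness argument in Thurston's space of projective measured foliations $\mathcal{PMF}(\Sigma)$, invoking the filling hypothesis only at the very end. Write $\phi_k := T_c^k\circ g$ and $c_i := g^i(c)$, so that the filling collection is exactly $\{c_i \mid i\in\Z\}$. By Thurston's trichotomy, $\phi_k$ fails to be pseudo-Anosov precisely when it is periodic or reducible, and in both of those cases some power fixes the isotopy class of an essential simple closed curve; so if $\phi_k$ is not pseudo-Anosov there are an essential curve $a$ and an integer $m\geqslant 1$ with $\phi_k^m(a)=a$. A first useful move is to record the identity $\phi_k^m = T_{c_0}^k T_{c_1}^k\cdots T_{c_{m-1}}^k\circ g^m$, which follows from $g T_c g^{-1}=T_{g(c)}$ and replaces the single twist by a product of $k$-th powers of twists along consecutive curves of the orbit.

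The heart of the matter is to show that only finitely many values of $k$ are \emph{bad} (non-pseudo-Anosov), and I would establish this by contradiction through a limiting argument. Suppose $\phi_{k_j}$ is bad for a sequence $|k_j|\to\infty$, witnessed by curves $a_j$ and periods $m_j$; I illustrate the mechanism in the case $m_j=1$, where $T_c^{k_j}(g(a_j))=a_j$. The two elementary inputs are the standard Dehn-twist estimate $\bigl|\,i(T_c^k(x),y)-|k|\,i(x,c)\,i(y,c)\,\bigr|\leqslant i(x,y)$, and the fact that $T_c^{k}(x)$ converges projectively to $[c]$ in $\mathcal{PMF}(\Sigma)$ as $|k|\to\infty$ whenever $i(x,c)>0$. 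Filling first supplies the non-degeneracy $i(a_j,c)\geqslant 1$: if $i(a_j,c)=0$ then $T_c^{k_j}$ fixes $a_j$, forcing $g(a_j)=a_j$, whence $i(a_j,c_n)=i(a_j,c)=0$ for all $n$ and $a_j$ would miss the whole orbit. Since twisting preserves intersection with $c$ we also get $i(g(a_j),c)=i(a_j,c)\geqslant 1$, so both sides of $a_j=T_c^{k_j}(g(a_j))$, read forwards and backwards, converge projectively to $[c]$: that is $[a_j]\to[c]$ and $[g(a_j)]\to[c]$. Applying the (continuous) action of $g$ to the first limit gives $[g(a_j)]\to[g(c)]=[c_1]$, hence $[c]=[c_1]$, i.e.\ $g(c)=c$. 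But then the orbit collapses to the single curve $c$, which cannot fill a surface of genus $\geqslant 2$ --- the contradiction that rules out infinitely many bad $k$.

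The main obstacle is to sharpen ``finite'' to ``at most seven consecutive'', and to treat the periods $m_j$ honestly. For the periods one replaces the relation above by $T_{c_0}^{k}\cdots T_{c_{m-1}}^{k}(g^m(a))=a$ and runs the same projective-limit argument along the finite sub-collection $c_0,\dots,c_{m-1}$; the case $m_j\to\infty$ needs an additional diagonal/compactness step and is the most delicate point. Consecutiveness I expect to obtain by making the estimate quantitative: either by showing that a suitable translation length for the action of $\phi_k$ (on $\mathcal{PMF}(\Sigma)$, or on the curve complex, where pseudo-Anosov is exactly loxodromic) is a proper convex function of $k$, so that its non-positive locus is an interval, or by noting that the twisting coordinate of the reduction curve about $c$ is forced to equal $k$ up to a bounded error, pinning $k$ to a window. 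In both descriptions it is the explicit constant in the twist estimate that bounds the width of the interval by seven. Granting this, the final ``in particular'' is immediate: an interval of at most seven integers cannot contain both $0$ and $7$, so at least one of $\phi_0=g$ and $\phi_7=T_c^7\circ g$ is pseudo-Anosov.
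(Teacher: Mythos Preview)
The paper does not prove this statement: Theorem~\ref{T:Fathi} is simply quoted from Fathi's 1987 paper and used as a black box. So there is no ``paper's own proof'' to compare against, and any assessment must be of your argument on its own terms.

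As a self-contained proof, your proposal is incomplete, and you essentially say so yourself. The sketch handles only the period $m=1$ case of the finiteness claim, and even there the limiting step is not quite airtight: the convergence $T_c^{k_j}(x)\to[c]$ in $\mathcal{PMF}$ is stated for a \emph{fixed} $x$ with $i(x,c)>0$, but you apply it to the varying sequence $x=g(a_j)$; one needs to pass to a subsequential limit in $\mathcal{PMF}$ and argue that the limit still has positive intersection with $c$, which does not follow from $i(g(a_j),c)\geqslant 1$ alone after projectivisation. For general periods $m_j$, and especially $m_j\to\infty$, you acknowledge an unaddressed ``diagonal/compactness step''. Most importantly, the passage from ``finitely many bad $k$'' to ``at most seven consecutive bad $k$'' is the actual content of Fathi's theorem, and your two suggested mechanisms (convexity of a translation length in $k$; twisting-coordinate bounds) are plausible heuristics but not arguments --- neither the convexity nor the explicit constant $7$ is derived. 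Fathi's own proof proceeds via a careful analysis of invariant measured foliations and intersection numbers rather than via curve-complex translation lengths; if you want to reconstruct the bound, that is the route to follow.

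For the purposes of the present paper none of this matters: the authors only need the ``in particular'' clause, and they take it on citation.
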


\begin{lemma}\label{L:ab}
There are two curves $a,b$ in $\Sigma$ such that for all but finitely many 
elements $g$ of~$\G$, either $a$ and~$g(a)$ together fill~$\Sigma$, or $b$ and $g(b)$ together fill~$\Sigma$ (or both). 
\end{lemma}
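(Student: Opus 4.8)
The plan is to reformulate ``filling'' as a coarse-geometric condition in the curve complex $\CC(\Sigma)$, and then apply the acylindricity of the action of $\G$ on $\CC(\Sigma)$.

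First I would recall the standard dictionary between filling and distance in $\CC(\Sigma)$: two curves $\alpha,\beta$ fill $\Sigma$ if and only if $d_{\CC}(\alpha,\beta)\geqslant 3$. Indeed, $d_{\CC}(\alpha,\beta)\leqslant 2$ means that there is a vertex $\gamma$ of $\CC(\Sigma)$ with $d_{\CC}(\alpha,\gamma)\leqslant 1$ and $d_{\CC}(\gamma,\beta)\leqslant 1$, i.e.\ an essential simple closed curve $\gamma$ that is disjoint from or isotopic to each of $\alpha$ and $\beta$; and the existence of such a $\gamma$ is exactly the failure of $\alpha\cup\beta$ to fill $\Sigma$. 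In particular, for a curve $c$ and $g\in\G$, the curves $c$ and $g(c)$ fail to fill $\Sigma$ precisely when $d_{\CC}(c,g(c))\leqslant 2$.

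Next I would invoke Bowditch's theorem \cite{BowditchTightGeodCC}: the curve complex $\CC(\Sigma)$ is Gromov-hyperbolic and the action of $\G$ on it is acylindrical, meaning that for every $K\geqslant 0$ there are constants $R=R(K)$ and $N=N(K)$ such that for any two vertices $x,y$ with $d_{\CC}(x,y)\geqslant R$,
$$
\#\bigl\{\,h\in\G \ \bigm|\ d_{\CC}(x,h(x))\leqslant K \text{ and } d_{\CC}(y,h(y))\leqslant K\,\bigr\} \ \leqslant\ N .
$$
Apply this with $K=2$ to obtain the associated $R$ and $N$, and, using that $\CC(\Sigma)$ has infinite diameter, choose curves $a$ and $b$ with $d_{\CC}(a,b)\geqslant R$. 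Then the set
$$
E \ = \ \bigl\{\, g\in\G \ \bigm|\ d_{\CC}(a,g(a))\leqslant 2 \text{ and } d_{\CC}(b,g(b))\leqslant 2 \,\bigr\}
$$
has at most $N$ elements, hence is finite; and by the previous paragraph every $g\notin E$ has the property that $a$ and $g(a)$ fill $\Sigma$, or $b$ and $g(b)$ fill $\Sigma$. This is precisely the statement of the lemma.

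I do not expect a genuine obstacle here: the argument is short once both ingredients are on the table. The only points that call for a little care are quoting the well-known equivalence between filling and $d_{\CC}\geqslant 3$ correctly (including the degenerate cases where $\gamma$ coincides with $\alpha$ or $\beta$), and citing Bowditch's result in the quantitative ``acylindrical action'' form above rather than in the weaker form ``$\G$ is acylindrically hyperbolic''. It is also worth observing that $R$ and $N$, and hence the chosen curves $a$ and $b$, depend only on the surface $\Sigma$ and not on $g$, so that the finite exceptional set in the statement is honest.
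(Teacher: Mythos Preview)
Your proof is correct and follows essentially the same approach as the paper: translate filling into the condition $d_{\CC}\geqslant 3$, then apply Bowditch's acylindricity to a pair of curves chosen far apart in $\CC(\Sigma)$. Your choice of parameter $K=2$ with non-strict inequalities is equivalent to the paper's $r=3$ with strict inequalities (distances are integers), and your explicit mention of the infinite diameter of $\CC(\Sigma)$ to guarantee that such $a,b$ exist is a useful detail the paper leaves implicit.
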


{\bf Proof of Lemma~\ref{L:ab} } 
Recall that two curves together fill $\Sigma$ if and only if, in the curve complex $\CC(\Sigma)$, they are at distance $\geqslant 3$.

Next we recall a fundamental result due to Bowditch \cite{BowditchTightGeodCC}: the mapping class group~$\G$ acts \emph{acylindrically} on the curve complex, i.e., for any $r\geqslant 0$, there exist $R(r), \ N(r)\geqslant 0$ so that for any two vertices $a$, $b$ of the curve complex with $d_{\CC}(a,b)>R(r)$ there are at most $N(r)$ distinct elements $g$ of $\G$ such that $d_{\CC}(a,g(a))<r$ and $d_{\CC}(b,g(b))<r$. 

We shall apply this result in the case~$r=3$.
We simply choose arbitrarily two curves $a$ and $b$ in~$\Sigma$, the only restriction being that $d_{\CC}(a,b)>R(3)$. Bowditch's result says that for all but finitely many (at most~$N(3)$) elements~$g$ of~$\G$, the action of~$g$ on the curve complex displaces least one of the two vertices, $a$ or~$b$, by at least~$3$. 
This completes the proof of Lemma~\ref{L:ab}. \hfill$\Box$ 

Now we define $\F'=\{1_\G\cup T_a^7\cup T_b^7\}$. Fathi's theorem and Lemma~\ref{L:ab} together imply the following: 
for every element $g$ of~$\G$, apart from the finitely many exceptional ones, at least one of the mapping classes in $\{f\circ g \ | \ f\in\F'\}$ is pseudo-Anosov.

Now we have to deal with the finitely many exceptional elements of~$\G$, say $\{g_1,\ldots,g_N\}$, where $N\leqslant N(3)$. For each $g_i$, we choose one $f_i\in\G$ such that $f_i\circ g_i$ is not an exceptional element. Then for every~$i$, at least one of the mapping classes in $\{f\circ f_i\circ g_i \ | \ f\in \F'\}$ is pseudo-Anosov. Thus we have completed the proof of Theorem~\ref{T:main}, with
$$
\F = \left\{ 1, T_a^7, T_b^7 \right\} \bigcup_{i=1,\ldots,N} \left\{ f_i, \ T_a^7\circ f_i, \ T_b^7\circ f_i \right\}
$$


{\bf Proof of Corollary~\ref{C:main} } \ Let $R'=\max \ \{ d_\Gamma(1,f) \ | \ f\in\F\}+1$. Theorem~\ref{T:main} tells us that the union of all the balls of radius~$R'$ centered on the pseudo-Anosov vertices of~$\Gamma$ is the whole Cayley graph~$\Gamma$. 

Now for $R>R'$, the centers of the balls of radius~$R'$ covering $B_\Gamma(1,R-R')$ have to lie in $B_\Gamma(1,R)$. 
Thus the union of the balls of radius~$R'$ around the pseudo-Anosov vertices of~$B_\Gamma(1,R)$ contains $B_\Gamma(1,R-R')$. We obtain
$$
|B_\Gamma(1,R')| \ \cdot \ |\text{pseudo-Anosov elements of }\G \ \cap \  B_\Gamma(1,R)| \ \geqslant \ |B_\Gamma(1,R-R')| 
$$
and hence
$$
\frac{|\text{pseudo-Anosov elements of }\G \ \cap \  B_\Gamma(1,R)|}{|B_\Gamma(1,R)|} \geqslant \frac{|B_\Gamma(1,R-R')|}{|B_\Gamma(1,R)|}\cdot \frac{1}{|B_\Gamma(1,R')|}
$$
Both factors can be bounded below independently of~$R$. Indeed, since all vertices of~$\Gamma$ are of valence at most~$2|S|$, we have the very rough estimate
$$
\geqslant \frac{1}{2|S|+(2|S|)^2+\ldots+(2|S|)^{R'}} \cdot \frac{1}{1+2|S|+(2|S|)^2+\ldots+(2|S|)^{R'}} \geqslant \frac{1}{(2|S|)^{2(R'+1)}}
$$
Thus the $\liminf$ studied in Corollary~\ref{C:main} is at least $(2\, |S|)^{-2(R'+1)}$. \hfill$\Box$

\begin{remark}\label{R:AltProof}
After a first version of this paper appeared as a preprint, Mladen Bestvina kindly pointed out to us that there is a completely different proof of Theorem~\ref{T:main}. This proof has the virtue of also applying to other contexts, notably to proving that fully irreducible elements have positive density in~$Out(F_n)$. 
The key is the existence~\cite{BestvinaFujiwara} of homogeneous quasi-morphisms $\phi\co \G\to\R$ which are unbounded, but so that $|\phi|$ is uniformly bounded by some constant~$C$ on all reducible and periodic elements. 
Now simply fix some quasi-morphism~$\phi$, and some element $f$ of~$\G$ with $\phi(f)>C+\Delta$ and $\phi(f^{-1})<-C-\Delta$ (where $\Delta$ denotes the defect of~$\phi$). Then for any $g\in\G$,  either $f^{-1}\circ g$ or $f\circ g$ is pseudo-Anosov. For details in the $Out(F_n)$-context see~\cite{BestvinaFeighn}, particularly Remark~4.33.
\end{remark}
\bigskip


\section{Pseudo-Anosovs in subgroups of mapping class groups}

Throughout this section, we take $\H$ to be a subgroup of $\G$, and we take $\Gamma_{\hspace{-2pt}\H}$ to be the Cayley graph of~$\H$ with respect to any fixed finite generating set. We shall be interested in the case when $\H$ satisfies the following condition: 

{\bf Condition $(*)$ } There are vertices $a,b$ of the curve complex of~$\mathcal{CC}(\Sigma)$ and integers $k_a,k_b\in\Z$ such that 
\begin{itemize}
\item $d_{\mathcal{CC}}(a,b)\geqslant R(3)$ and
\item $T_a^{k_a}, T_b^{k_b}\in \H$
\end{itemize}
were $R(3)$ is the number used in the proof of Lemma~\ref{L:ab} in Section~\ref{S:Proofs}. 

\begin{example} Condition~$(*)$ is for instance satisfied
\begin{itemize}
\item if $\H$ is a finite index subgroup of~$\G$, or 
\item if $\H$ is the Torelli group -- see \cite[Chapter 6]{FarbMargalitPrimer}. Note that the Torelli group contains in particular all Dehn twists along separating curves of~$\Sigma$, or 
\item if $\H$ is any finite index subgroup of the Torelli group.
\end{itemize}
\end{example}

\begin{theorem}\label{T:MainSubgroups}
Suppose $\H$ is a subgroup of $\G$, the mapping class group of~$\Sigma$, satisfying Condition~$(*)$ above. Then pseudo-Anosovs have positive density in~$\H$: there exists a finite subset $\F$ of~$\H$ such that for any element $g$ of~$\H$, at least one of the mapping classes in $\{f\circ g \ | \ f\in\F\}$ is pseudo-Anosov. Moreover, 
$$
\liminf_{R\to\infty} \ \frac{|\text{pseudo-Anosov elements of }\H \ \cap \  B_{\Gamma_{\hspace{-1pt}\H}}(1,R)|}{|B_{\Gamma_{\hspace{-1pt}\H}}(1,R)|} \ > \ 0
$$
\end{theorem}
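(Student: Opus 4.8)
The plan is to mimic the proof of Theorem~\ref{T:main} almost verbatim, using Condition~$(*)$ to supply, inside~$\H$, the data that were chosen freely in the ambient group. First I would take the curves $a,b$ and the powers $k_a,k_b$ furnished by Condition~$(*)$, and observe that the Dehn twist powers $T_a^{k_a}, T_b^{k_b}$ now lie in~$\H$, while $d_{\CC}(a,b)\geqslant R(3)$ is exactly the hypothesis under which the acylindricity argument of Lemma~\ref{L:ab} applies. So Bowditch's theorem, applied with $r=3$ and the \emph{full} group~$\G$ acting on $\CC(\Sigma)$, already tells us that for all but at most $N(3)$ elements $g\in\G$ — in particular for all but finitely many $g\in\H$ — either $a$ and $g(a)$ fill~$\Sigma$, or $b$ and $g(b)$ fill~$\Sigma$.

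Next I would apply Fathi's theorem (Theorem~\ref{T:Fathi}) in the following mildly strengthened form: if $c$ and $g(c)$ fill~$\Sigma$ then the curves $\{g^n(c)\mid n\in\Z\}$ fill~$\Sigma$ a fortiori, so $T_c^k\circ g$ is pseudo-Anosov except for at most seven consecutive values of~$k$; consequently, among any $8$ consecutive powers $T_c^{k_c}\cdot j$, $j=0,\dots,7$ — hence among $\{(T_c^{k_c})^j\circ g\mid j=0,\dots,7\}$, whose exponents $j k_c$ form an arithmetic progression with step $|k_c|\geqslant 1$ — at least one is pseudo-Anosov; actually it is cleaner to say that since $7$ consecutive integers contain at most $\lceil 7/|k_c|\rceil+1\leqslant 8$ multiples of $k_c$, at least one of $(T_c^{k_c})^j\circ g$ for $j\in\{0,\dots,8\}$ is pseudo-Anosov. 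Thus, setting $\F'=\{(T_a^{k_a})^j : 0\leqslant j\leqslant 8\}\cup\{(T_b^{k_b})^j : 0\leqslant j\leqslant 8\}\subset\H$, we get that for every non-exceptional $g\in\H$, at least one of $\{f\circ g : f\in\F'\}$ is pseudo-Anosov. Then the finitely many exceptional elements $g_1,\dots,g_M\in\H$ (note $M\leqslant N(3)$) are absorbed exactly as before: for each $g_i$ pick $f_i\in\H$ with $f_i\circ g_i$ non-exceptional — such an $f_i$ exists since $\H$ is infinite (it contains the infinite-order element $T_a^{k_a}$) whereas only finitely many elements are exceptional — and enlarge $\F$ to $\F'\cup\bigcup_i(\F'\cdot f_i)$. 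This finite $\F\subset\H$ proves the first assertion.

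For the density statement, I would simply repeat the proof of Corollary~\ref{C:main} with $\Gamma$ replaced by $\Gamma_{\hspace{-1pt}\H}$ and $S$ replaced by the chosen finite generating set $S_{\H}$ of~$\H$: letting $R'=\max\{d_{\Gamma_{\hspace{-1pt}\H}}(1,f):f\in\F\}+1$, the balls of radius $R'$ about the pseudo-Anosov vertices of $\Gamma_{\hspace{-1pt}\H}$ cover $\Gamma_{\hspace{-1pt}\H}$, and the same counting gives
$$
\liminf_{R\to\infty}\ \frac{|\text{pseudo-Anosov elements of }\H\ \cap\ B_{\Gamma_{\hspace{-1pt}\H}}(1,R)|}{|B_{\Gamma_{\hspace{-1pt}\H}}(1,R)|}\ \geqslant\ (2\,|S_{\H}|)^{-2(R'+1)}\ >\ 0 .
$$

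I do not expect a genuine obstacle here; the only point requiring a little care is that $T_a^{k_a}$ and $T_b^{k_b}$ are not necessarily in $\H$ with exponent~$1$, so Fathi's seven-exceptional-values bound must be converted into a statement about the progression of exponents actually available inside~$\H$ — this is the bookkeeping that produces the range $0\leqslant j\leqslant 8$ (or more conservatively $0\leqslant j\leqslant 7|k_a|$, $0\leqslant j\leqslant 7|k_b|$, which is still finite) in the definition of $\F'$. Everything else is a transcription of Section~\ref{S:Proofs}, since Bowditch's acylindricity is a statement about the $\G$-action and needs no modification for the subgroup, and the covering/counting argument for the $\liminf$ never used any property of the group beyond the local finiteness of its Cayley graph.
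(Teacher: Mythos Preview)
Your proposal is correct and follows essentially the same route as the paper: use Condition~$(*)$ to supply $a,b$ with $d_{\CC}(a,b)\geqslant R(3)$, invoke Bowditch's acylindricity for the full $\G$-action to get Lemma~\ref{L:ab} for all but finitely many $g\in\H$, combine with Fathi to build $\F'\subset\H$, absorb the exceptional elements by left-translation inside~$\H$, and repeat the covering argument of Corollary~\ref{C:main} in $\Gamma_{\hspace{-1pt}\H}$.

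The one place where you differ from the paper is the bookkeeping around Fathi's seven-value bound. You keep the given exponents $k_a,k_b$ and instead enlarge $\F'$ to include the powers $(T_a^{k_a})^j$, $(T_b^{k_b})^j$ for $j=0,\dots,8$, arguing that an arithmetic progression of step $|k_c|$ and length~$9$ cannot be swallowed by a window of seven consecutive integers. The paper does something slicker: it simply replaces $k_a,k_b$ by integer multiples of themselves so that $k_a,k_b\geqslant 7$, whereupon $0$ and $k_c$ cannot both lie in seven consecutive integers, and $\F'=\{1,T_a^{k_a},T_b^{k_b}\}$ already suffices. Both are valid; the paper's trick yields a smaller $\F$, while your version avoids modifying $k_a,k_b$ and makes explicit (as the paper does not) why a non-exceptional translate $f_i\circ g_i$ exists inside~$\H$.
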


\begin{proof} The proof follows very closely the proof of Theorem~\ref{T:main}. This time we choose the vertices $a$ and $b$ of~$\Sigma$, as well as integers $k_a$ and $k_b$ according to Condition~$(*)$. Possibly after replacing $k_a$ and $k_b$ by integer multiples of themselves, we can assume moreover that $k_a,k_b\geqslant 7$.

Defining $\F'=\{1_\H\cup T_a^{k_a}\cup T_b^{k_b}\}$,  Fathi's theorem and Lemma~\ref{L:ab} now imply: 
for every element $g$ of~$\H$, apart from the finitely many exceptional ones, at least one of the mapping classes in $\{f\circ g \ | \ f\in\F'\}$ is pseudo-Anosov.

For each exceptional element $g_i\in\H$, we choose one $f_i\in\H$ such that $f_i\circ g_i\in\H$ is non-exceptional. Now with the exact same argument as in the proof of Theorem~\ref{T:main} we have: for every $g\in\H$, at least one of the mapping classes in $\{f\circ g \ | \ f\in \F'\}$ is pseudo-Anosov, with
$$
\F = \left\{ 1, T_a^{k_a}, T_b^{k_b} \right\} \bigcup_{i=1,\ldots,N} \left\{ f_i, \ T_a^{k_a}\circ f_i, \ T_b^{k_b}\circ f_i \right\}
$$
Finally, the deduction that the limit inferior of the proportion of pseudo-Anosov elements in large balls in the Cayley graph of~$\H$ is strictly positive works literally as the proof of Corollary~\ref{C:main}, just replacing $\G$ by~$\H$ and $\Gamma$ by~$\Gamma_{\hspace{-1pt}\H}$ throughout.
\end{proof}

\begin{remark} The proof of Theorem~\ref{T:main} did not use the full strength of Bowditch's theorem~\cite{BowditchTightGeodCC} -- knowing that there exists one element of~$\G$ which acts in a weakly propertly discontinuous (WPD) manner on the curve complex \cite{PrzytyckiSisto} was more than sufficient. However, for our proof of Theorem~\ref{T:MainSubgroups}, Bowditch's theorem is needed almost in its full strength.
\end{remark}

{\bf Acknowledgements } We thank Anna Lenzhen for a helpful conversation,  and Mladen Bestvina for the email mentioned in Remark~\ref{R:AltProof}. Mar\'ia Cumplido  was supported by a PhD contract founded by Universit\'{e} Rennes 1, Spanish Projects MTM2013-44233-P, MTM2016-76453-C2-1-P and FEDER and the French-Spanish mobility programme "M\'{e}rim\'{e}e 2015".  


\end{document}